\newtheorem{theorem}{Theorem}[section]
\newtheorem{corollary}[theorem]{Corollary}
\theoremstyle{definition}
\newtheorem{definition}[theorem]{Definition}
\theoremstyle{remark}
\newtheorem{remark}[theorem]{\sc Remark}
\newtheorem{example}[theorem]{\sc Example}
\renewcommand{\Box}{\square}    %\diamond
\newcommand{\Sing}{{\mathrm{Sing\hspace{2pt}}}}
\newcommand{\rank}{{\mathrm{rank\hspace{1pt}}}}
\newcommand{\Disc}{{\mathrm{Disc\hspace{2pt}}}}
\newcommand{\id}{{\mathrm{id}}}
\newcommand{\im}{{\mathrm{Im}}}
\newcommand{\Fib}{{\mathrm{Fib}}}
\newcommand{\e}{\varepsilon}
\newcommand{\m}{\setminus}
\newcommand{\fin}{\hspace*{\fill}$\Box$\vspace*{2mm}}
\newcommand{\cF}{{\mathcal F}}
\newcommand{\cS}{{\mathcal S}}
\newcommand{\cQ}{{\mathcal Q}}
\newcommand{\cW}{{\mathcal W}}
\newcommand{\bR}{{\mathbb R}}
\newcommand{\bC}{{\mathbb C}}
\newcommand{\bK}{{\mathbb K}}
\begin{document}

\title[Fibrations of tamely composable maps]{Fibrations of tamely composable maps}
%Map germs: the possibility of a fibration
\author{\sc Ying Chen}
\address{School of Mathematics and Statistics, HuaZhong University of Science and Technology WuHan 430074, P. R. China}
\email{ychenmaths@hust.edu.cn}

\author{\sc Cezar Joi\c{t}a}
\address{Institute of Mathematics of the Romanian Academy, P.O. Box 1-764,
 014700 Bucharest, Romania.} % and Laboratoire Europ\' een Associ\'e  CNRS Franco-Roumain Math-Mode}
\email{Cezar.Joita@imar.ro}

\author{Mihai Tib\u{a}r}
\address{Univ. Lille, CNRS, UMR 8524 -- Laboratoire Paul Painlev\'e, F-59000 Lille,
France}
\email{mihai-marius.tibar@univ-lille.fr}

\subjclass[2010]{14D06,  58K05, 57R45, 14P10, 32S20, 32S60, 58K15, 32C18}

\keywords{real map germs,  composed maps, fibrations}

\thanks{Ying Chen acknowledges the support from the National Natural Science Foundation of China (NSFC) (Grant no. 11601168). Cezar Joi\c{t}a acknowledges support from GDRI ECO Math. Mihai Tib\u ar acknowledges  support from the Labex CEMPI (ANR-11-LABX-0007-01). }

\begin{abstract}
 We study composed map germs with respect to their local fibrations. Under most general conditions, inspired by the tameness condition that was introduced recently, we prove the existence of singular tube fibrations, and we determine the topology of the fibres.
\end{abstract}

\maketitle

%%%%%%%%%%%%%%%%%%%%%%%%%%%%
\section{Introduction}

The existence of a fibered structure on some region of the space has deep impact in mathematics and physics.
John Milnor \cite{Mi} proved in 1968 that a holomorphic function induces a locally trivial fibration in the neighbourhood of a singular point. He 
 launched the challenge of finding the conditions under which complex or real maps may induce such fibrations.

Fibrations of map germs have been considered by many authors ever since, and a lot of valuable knowledge has been amassed
in more and more cases, and in increasing generality.

In this framework, the problem under what conditions the composition of two map germs  
 may be endowed with a local  fibration has been a long term project. 
One first considered the significant setting of a function of type $f\oplus g$,  
where $f$ and $g$ are holomorphic function germs in separate variables and with isolated singularities,  which can be viewed as the composition $G\circ (f,g)$, where $G(u,v) := u+v$ is the simplest linear function.
Let us recall that the original result by Sebastiani and Thom \cite{ST}  determines in this case the topology of the Milnor fibre, and  shows that the monodromy is the tensor product of the monodromies of $f$ and $g$. This was the spring of a stream of studies and far-reaching generalisations e.g. \cite{Sa}, \cite{Ga}, \cite{N1, N2},  \cite{Ba}, \cite{Il}, \cite{HM}, etc,  becoming a \emph{principle} in higher categories,  e.g.  \cite{Ma2}, \cite{DL}, \cite{Le}.

A new remarkable outcome occurred as  N\' emethi  considered the so-called ``composed functions''  in \cite{Ne}, i.e. functions of the form $H :=G\circ F$, where $G$ is a polynomial of 2 complex variables and $F := (f,g)$ is an ICIS, with $f,g$  holomorphic function germs in mixed variables.  Recalling Sakamoto's join result for non-isolated singularities \cite{Sa2}, this was the first time when in the composition $G\circ F$ the map $F$ had a possibly positive dimensional discriminant (in this case of dimension $\le 1$).  N\' emethi  studies in \cite{Ne}  the homotopy type of the Milnor fibre, generalising the join construction, expressing the zeta-function of the monodromy in terms of the zeta-functions of  $f$ and $g$,  and the multivariable Alexander polynomial of $G$.

 More recently, one has studied  in  \cite{PT} the existence of tube fibrations for the real composed maps of type $G(f,g)$ where $f, g$ are holomorphic function germs, and $G=u\bar v$,  by focussing on the Thom condition at the zero set $G(f,g)=0$, in case of non-isolated singularities.
Still in the real setting,  Inaba \cite{In} establishes a general join theorem 
for $f\oplus g$, where $f$ and $g$ are real map germs with zero-dimensional discriminant which are assumed to have local fibrations in the sense of \cite{ACT}.

%Thom-Sebastiani principle....

We have addressed in \cite{CT} the problem of finding conditions under which the composition of two real map germs with positive dimensional discriminant  induces a local singular tube fibration.  The generality consists in including the discriminant in the fibrations, and constructing in this way  a bunch of singular fibrations, and was started recently by the papers  \cite{ACT}  and  \cite{JT}.

Let us point out first that the existence of fibrations sets new challenges in the real setting, some of which have been treated in \cite{ACT}, \cite{JT}. In particular we have found in \cite{JT} a general condition,  called ``tameness'', which solves the local image problem for maps, and in the same time provides local singular tube fibrations.  By \emph{singular tube fibration} we mean that we find stratifications of the source and the target of a map such that over each stratum in the target we have a locally trivial stratified fibration. In particular we include the discriminant of the map in this multi-fibration structure.

\vspace{3pt}

Our paper builds on this general idea of multi-fibration. % and formulates a natural condition under which such a fibration exists. 
We introduce here a new condition called ``tamely composable'', inspired by the tameness and by the proof of \cite[Theorem 3.2]{CT}, in order to insure the existence of the singular tube fibration for the composed map $H = G\circ F$ in the most general stratified setup.
After proving our existence result Theorem  \ref{t:composed}, we  study the topology of the fibre of  the composed map $H = G\circ F$.
Upon  convenient choices of Milnor data for each of the tube fibrations of $F$, of $G$ and of $H$ (cf \S \ref{ss:Milnordata}), 
two more problems persist: 

(1). The image by $F$ of the fibre $H^{-1}(a)$ might not contain the fibre of $G$.

(2).  The fibre $H^{-1}(a) = F^{-1}(G^{-1}(a))$  contains the pull-back by $F$ of the fibre of $G$ but it is not equal to it.

Problem (1) may happen when $\im F$ is not open as a set-germ at $0$, like in case of the very simple map germ $(x,y) \mapsto (x, xy)$. This problem would be therefore solved if $F$ is a locally open map. The class of locally open maps has been characterised recently in  \cite{JT2} by an algebraic-analytic condition which had been conjectured by Huckleberry in 1971, cf \cite{Hu}. 

Problem (2) is considered here and needs several steps. First of all we need a very careful construction of the stratifications of the maps; this is carried out in Section \ref{s:tamely}.  In Section \ref{topology} we give the topological structure of the stratified fibre of $H$. Finally we describe how our results apply in Nemethi's setup \cite{Ne}.

%%%%%%%%%%%%%%%%%%%%%%%%%%%%
\section{Preliminaries on  tame map germs}\label{s:mapgerms}

We recall here a few definitions and properties that we need for building our main results.
%%%%%%%%%%%%
%%%%%%%%%%%%
% \section{The existence of fibrations for map germs}
 \subsection{$\rho$-regularity and tame map germs}\label{s:nmg}
 %%%%%%%%%%%%%%%%

Let  $G:(\bR^{m},0) \rightarrow (\bR^{p}, 0)$ be a non-constant  analytic map germ, $m \ge p \ge1$.
Let $U \subset \bR^m$ be a manifold,  and let	
 $$M(G_{|U}):=\left\lbrace x \in U \mid \rho_{|U} \not\pitchfork_x G_{|U} \right\rbrace $$
	be the set of \textit{$\rho$-nonregular points} of $G_{|U}$, or \emph{the Milnor set of $G_{|U}$}, where
	  $\rho := \| \cdot \|$ denotes here the Euclidean distance function, and $\rho_{|U}$ is its restriction to $U$.
	
	    It turns out from the definition that $M(G_{|U})$ is real analytic.
	     In the following we will actually consider the germ at 0 of $M(G_{|U})$.
	By definition $M(G_{|U})$ coincides with the singular set  $\Sing (\rho, G)_{|U}$ defined in its turn as
the set of points $x\in U$ such that  either  $x\in \Sing(G_{|U})$,   or $x\not\in \Sing(G_{|U})$ and $\rank_{x}( \rho_{|U}, G_{|U}) = \rank_{x}(G_{|U})$.

%\end{definition}	
\begin{definition}[\emph{The Milnor set in the stratified setting}]\label{d:Mstr} \
Let  $G:(\bR^{m},0) \rightarrow (\bR^{p}, 0)$, with $m\geq p >1$, be a non-constant  analytic map germ. We say that  a finite semi-analytic Whitney (a)-regular stratification $\cW$ of $\bR^{m}$ is a stratification of $G$ if $\Sing G$ is a union of strata, and  such that the restriction $G_{|W}$ has constant rank  for any $W\in \cW$.

Let $W \in \cW$,   tacitly understood as the germ at 0 of the stratum $W$, and let
$M(G_{|W})$  %:=\{x \in W_\alpha \mid \rho_{|W_\alpha} \not\pitchfork_x G_{|W_\alpha} \}$
be the Milnor set of $G_{|W}$, as defined above.
One calls 
$$M_{\cW}(G):=\bigsqcup_{W\in \cW} M(G_{|W_\alpha})$$
 the set of \textit{stratwise $\rho$-nonregular points} of $G$ with respect to the stratification $\cW$.
\end{definition}

 By definition, if $\rank G_{|W}= \dim W$, then $W\subset M_{\cW}(G)$. Notice that the Milnor set  $M_{\cW}(G)$  is  closed, due to the Whitney (a)-regularity of the stratification.

By Milnor's classical result on the local conical structure of semi-analytic sets \cite{Mi}, there exists $\e_{0}>0$ such that the manifold $G^{-1}(0) \m \Sing G$ is transversal to the  sphere $S^{m-1}_{\e}$ centred at 0,  for any $0<\e <\e_{0}$. For any fixed point $a\in G^{-1}(0) \m \Sing G$,  a whole open ball $B$ centred at $a$ does not intersect  $\Sing G$, and it then follows that the nearby fibres of $G$ inside $B$
  are also transversal to the levels of the distance function $\rho$, provided that $B$ is small enough.
This implies that $M_{\cW}(G)\cap (G^{-1}(0) \m \Sing G) =\emptyset$,
which proves the following inclusion (see also \cite{JT, CT}):
\begin{equation}\label{r:milnorset}
 M_{\cW}(G)\cap G^{-1}(0) \subset \Sing G \cap G^{-1}(0).
\end{equation}

\begin{definition}[\emph{Tame map germs, \cite{JT}}]\label{d:tame} \ \\
Let $G:(\bR^{m},0) \rightarrow (\bR^{p}, 0)$, with $m > p \ge 2$, be a non-constant  analytic map germ. We say that $G$ is \emph{tame} with respect to the stratification $ \cW$ if the following  inclusion of set germs holds:
\begin{equation}\label{eq:main2}
\overline{M_{\cW}(G) \m  G^{-1}(0)} \cap G^{-1}(0) \subset     \{ 0\}.
\end{equation}
 \end{definition}

It follows from the definition that if $G$ is tame then the closure of the strata $W$ of $\cW$ 
such that $\rank G_{|W}= \dim W$ intersect  $G^{-1}(0)$ at $\{ 0\}$ only. 
Notice also that the maps $G$ with fibre $G^{-1}(0) = \{0\}$ are tame.

The existence of the images as set germs is insured by the following result:

\begin{theorem}\label{main-new}\cite{JT}
Let  $G:(\bR^{m},0) \rightarrow (\bR^{p}, 0)$, with $m \geq p >1$, be a non-constant  analytic map germ.
If $G$ is tame  with respect to the stratification $\cW$ then:
\begin{enumerate}
 \rm \item \it   $\im G$ and  $\Disc(G) := G(\Sing G)$ are well-defined as set germs.
 \rm \item \it For any stratum $V \in  \cW$, the image  $G(V)$ is a well-defined set germ at the origin.
 %\rm \item \it  $\Disc G = G(\Sing G)$.
\end{enumerate}
\fin
\end{theorem}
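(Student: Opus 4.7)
The strategy is a proof by contradiction that exploits the tameness hypothesis~\eqref{eq:main2} by forcing a minimizing sequence to lie in the stratified Milnor set $M_{\cW}(G)\setminus G^{-1}(0)$ while accumulating at a point of $G^{-1}(0)$ at positive distance from the origin. I would establish~(b) first and then deduce~(a) from it using the finiteness of~$\cW$.

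For~(b), fix a stratum $V\in\cW$ and a representative of $G$ defined on a neighbourhood of $\overline{B_{\e_0}}$. The goal is to show that for every $0<\e<\e_0$ there exists $\delta>0$ such that every $y\in G(V\cap B_{\e_0})$ with $\|y\|<\delta$ already belongs to $G(V\cap B_\e)$. Assume this fails and pick $y_n\to 0$ in $G(V\cap B_{\e_0})\setminus G(V\cap B_\e)$. For each $n$, let $x_n$ realise the minimum of $\rho=\|\cdot\|$ on the compact set $G^{-1}(y_n)\cap\overline V\cap\overline{B_{\e_0}}$, and let $W_n\in\cW$ be the stratum containing $x_n$; by hypothesis $W_n\subset\overline V$. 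Because $G|_{W_n}$ has constant rank, $G^{-1}(y_n)\cap W_n$ is a submanifold and $x_n$ is a local minimum of $\rho$ on it (being a global minimum on the larger stratified set), hence $x_n\in M(G|_{W_n})\subset M_{\cW}(G)$. The existence of a preimage in $V\cap B_{\e_0}$ forces the strict bound $\|x_n\|<\e_0$, so $x_n$ lies interior to the ball.

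The decisive step is the lower bound $\|x_n\|\ge\e$. When $W_n=V$ this is immediate from $y_n\notin G(V\cap B_\e)$. When $W_n\subsetneq\overline V\setminus V$, one rules out $\|x_n\|<\e$ by invoking Whitney $(a)$-regularity of $\cW$ together with the constant-rank local normal form of $G|_V$ near $x_n\in\overline V$: these would produce points of $V$ arbitrarily close to $x_n$ with the same $G$-value $y_n$ and hence of norm $<\e$, contradicting the assumption. Granting $\|x_n\|\ge\e$, we have $x_n\in M_{\cW}(G)\setminus G^{-1}(0)$ with $G(x_n)=y_n\to 0$; passing to a subsequence $x_n\to x_\infty$ yields $x_\infty\in\overline{M_{\cW}(G)\setminus G^{-1}(0)}\cap G^{-1}(0)$, which by tameness~\eqref{eq:main2} equals $\{0\}$, contradicting $\|x_\infty\|\ge\e>0$.

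Part~(a) follows at once: since $\cW$ is finite, $\im G=\bigcup_{W\in\cW}G(W)$ and $\Disc(G)=G(\Sing G)=\bigcup_{W\subset\Sing G}G(W)$ are finite unions of well-defined set germs by~(b), and hence themselves well-defined set germs at~$0$. The principal obstacle is the boundary case in~(b): a priori the minimizer on $\overline V$ can lie in a strictly lower stratum at norm less than~$\e$ without contradicting the absence of $V$-preimages in $B_\e$; ruling this out is the technical heart of the argument, and is where Whitney $(a)$-regularity and the constant-rank local form of $G|_V$ (possibly reinforced by a curve-selection argument in the semi-analytic setting) do the essential work.
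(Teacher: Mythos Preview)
The paper does not prove this theorem: it is quoted from \cite{JT} and closed with a $\Box$ (the \texttt{\textbackslash fin} marker), so there is no argument here to compare against. Your overall strategy---assume the germ is ill-defined, pick values $y_n\to 0$ with no preimage in the small ball, minimize $\rho$ on the fibre to land in $M_{\cW}(G)\setminus G^{-1}(0)$, and contradict~\eqref{eq:main2}---is the natural one and is presumably what \cite{JT} does; your deduction of~(a) from~(b) via finiteness of $\cW$ is also fine.

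The gap you flag yourself is real and is not closed by the hints you give. When the minimizer $x_n$ of $\rho$ on $\overline V\cap G^{-1}(y_n)$ lies on a lower stratum $W_n\subsetneq\overline V$ with $\|x_n\|<\e$, you claim that Whitney~(a) together with the constant-rank normal form of $G_{|V}$ produces $V$-points near $x_n$ with the \emph{same} value $y_n$. But the constant-rank normal form is available only at points of $V$, not at the boundary point $x_n\in W_n$, and Whitney~(a) controls limiting tangent spaces, not surjectivity of $G_{|V}$ onto a neighbourhood of $y_n$; nothing you have written forces $y_n\in G(V\cap B_\e)$. Put differently, $\overline V\cap G^{-1}(y_n)$ may be strictly larger than $\overline{V\cap G^{-1}(y_n)}$, and your minimizer may sit in the difference. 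One cleaner variant is to take the infimum $r_n$ of $\rho$ on $V\cap G^{-1}(y_n)$ (so $r_n\ge\e$ for free), pass to a limit $x_n$ of a minimizing sequence in $V$ (hence $\|x_n\|=r_n\ge\e$), and then argue $x_n\in M_{\cW}(G)$; but even here the case $x_n\in\partial V$ needs a genuine argument to transfer the asymptotic criticality from $V$ to $W_n$. Since the present paper defers entirely to \cite{JT}, you should consult that reference for the intended treatment of this step.
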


\begin{remark}\label{r:loja}
 Let us point out here that the images of strata $G(V)$ are well-defined set-germs by Theorem \ref{main-new}, thus they are subanalytic sets, and in particular they are triangulable, by the classical result of \L ojasiewicz \cite{Lo}.  This fact will be used in the next sections.
\end{remark}

\subsection{Singular stratified fibration theorem}\label{s:sing}
%%%%%%%%%%%%%%%%%%%%%

%\section{Existence of Milnor-Hamm tube fibration}
%%%%%%%%%%

We recall here the \emph{tame} condition. It turns out that this is the most handy and general condition  under which  one can prove the existence of a local singular fibration.
We consider the general case $\dim \Disc G >0$, and we refer to \cite{JT} for details.

\begin{definition}[\emph{Regular stratification}]\label{d:regularstr}\ 
Let  $G:(\bR^{m},0) \rightarrow (\bR^{p}, 0)$ be a non-constant  analytic map germ, $m> p >1$, and let
$ \cW$ be a Whitney (b)-regular stratification  of $G$ at 0, as defined above.
We assume that $G$ is  tame with respect to $\cW$.
Then  Theorem \ref{main-new}   tells that the
images of all strata of $ \cW$ are well-defined as set germs at 0.
By using the classical stratification theory,
there exists a germ of a finite subanalytic stratification $\cS$   of  the target such that  $\Disc G$ is a union of strata,  and that $G$ is  a stratified submersion relative to the couple of stratifications $( \cW,  \cS)$,
 meaning that the image by $G$ of a stratum  $W_\alpha \in  \cW$ is  a single stratum $S_{\beta} \in \cS$,
	and that the restriction $G_{|}:W_\alpha \to S_{\beta}$ is a submersion.
	
One then calls  $( \cW,  \cS)$ a \emph{regular stratification of the map germ $G$.}
\end{definition}		

\begin{definition}[\emph{Singular Milnor tube fibration}]\label{d:fib}
Let $G:(\bR^{m},0) \rightarrow (\bR^{p}, 0)$, $m\ge p>1$,  be a non-constant analytic map germ. Assume that there exists some regular stratification $( \cW,  \cS)$ of $G$.

We say that $G$ has  a \emph{singular Milnor tube fibration} relative to $( \cW,  \cS)$  if for any small enough $\e > 0$ there exists  $0<\eta \ll \e$ such that the restriction:
\begin{equation}\label{eq:tube1}
G_| :  B^{m}_{\e} \cap G^{-1}( B^{p}_\eta \m \{ 0\} ) \to  B^{p}_\eta \m \{ 0\}
\end{equation}
is a stratified locally trivial fibration which is independent, up to stratified homeomorphisms, of the choices of $\e$ and $\eta$.
By \emph{stratified locally trivial fibration} we mean that for any stratum $S_{\beta}$ of $ \cS$, the restriction $G_{| G^{-1}(S_{\beta})}$ is a locally trivial \emph{stratwise fibration}.
\end{definition}

By ``independent, up to stratified homeomorphisms, of the choices of $\e$ and $\eta$'' we mean that
when replacing $\e$ by some $\e'<\e$, and $\eta$ by some small enough $\eta'<\eta$, then the  map  \eqref{eq:tube1} and its analogous map for $\e'$ and $\eta'$
have the same stratified image in the smaller ball $B^{p}_{\eta'} \m \{ 0\}$, and the corresponding singular fibrations are stratified homeomorphic.

\begin{theorem} \label{t:tube}\cite{JT}
	Let $G:(\bR^m, 0) \to (\bR^p,0)$, $m > p\ge 2$,  be a non-constant analytic map germ. 	
	If $G$ is tame, then $G$ has a singular Milnor tube fibration \eqref{eq:tube1}.
\end{theorem}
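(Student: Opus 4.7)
The plan is to adapt the classical Milnor tube-fibration scheme to the Whitney-stratified setting and invoke Thom's first isotopy lemma. The two quantitative ingredients are the transversality of a small sphere $S^{m-1}_\e$ to every stratum of $\cW$, and a positive separation, after taking $G$-images, between the stratwise Milnor set $M_\cW(G)$ and the origin of $\bR^p$. First, I would fix $\e_0>0$ so small that for every $0<\e\le\e_0$ the sphere $S^{m-1}_\e$ is transverse to each stratum of $\cW$; this is the Whitney-stratified version of Milnor's local conic structure, and it also ensures that the refined stratification of $\overline{B^m_\e}$ by the pieces $W\cap \i B^m_\e$ and $W\cap S^{m-1}_\e$ remains Whitney (b)-regular.

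Next, fix such an $\e$ and consider the subanalytic germ $K:=\overline{M_\cW(G)\setminus G^{-1}(0)}$. The tameness hypothesis \eqref{eq:main2} reads $K\cap G^{-1}(0)\subset\{0\}$, hence on a compact representative $K\cap S^{m-1}_\e$ is disjoint from $G^{-1}(0)$, and its image $G(K\cap S^{m-1}_\e)$ is compact in $\bR^p\setminus\{0\}$. Any positive
$$\eta<\mathrm{dist}\bigl(0,\,G(K\cap S^{m-1}_\e)\bigr)$$
then yields $K\cap S^{m-1}_\e\cap G^{-1}(B^p_\eta\setminus\{0\})=\emptyset$, and combined with the inclusion \eqref{r:milnorset}, which confines the remainder of $M_\cW(G)$ to $G^{-1}(0)$, we obtain
$$M_\cW(G)\cap S^{m-1}_\e\cap G^{-1}(B^p_\eta\setminus\{0\})=\emptyset.$$

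For such $\e,\eta$ the space $N_{\e,\eta}:=\overline{B^m_\e}\cap G^{-1}(B^p_\eta\setminus\{0\})$, equipped with the refined stratification above, is mapped properly by $G$ onto $B^p_\eta\setminus\{0\}$, and by the previous two paragraphs combined with Definition \ref{d:regularstr} each of its strata is sent submersively onto a stratum of $\cS$. Thom's first isotopy lemma then yields the stratified locally trivial fibration \eqref{eq:tube1}, and the independence of the choices up to stratified homeomorphism follows from a further application of the isotopy lemma using the radial shrinking that connects $\overline{B^m_\e}$ to $\overline{B^m_{\e'}}$ over a sufficiently small $B^p_{\eta'}\setminus\{0\}$. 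I expect the main obstacle to lie in the passage from the pointwise tameness \eqref{eq:main2} to the uniform estimate on $\eta$ — extracting positivity of $\mathrm{dist}(0,G(K\cap S^{m-1}_\e))$ from a germ-level statement — which ultimately relies on the subanalyticity of $M_\cW(G)$ and on the compactness of the sphere, both provided by the Whitney (a)-regularity of $\cW$ built into Definition \ref{d:Mstr}.
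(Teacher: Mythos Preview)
The paper does not prove Theorem~\ref{t:tube}; it is quoted from \cite{JT} and no argument is given here beyond the citation. Your sketch is therefore not competing with any proof in the present paper, but it does reproduce the standard argument one expects in \cite{JT}: transversality of small spheres to $\cW$, the tameness bound \eqref{eq:main2} to separate $G(M_\cW(G)\cap S^{m-1}_\e)$ from $0$, and Thom's first isotopy lemma on the refined stratification of the closed tube. The logic is sound; in particular your decomposition $M_\cW(G)=K\cup\bigl(M_\cW(G)\cap G^{-1}(0)\bigr)$ and the use of \eqref{r:milnorset} to discard the second piece over $B^p_\eta\setminus\{0\}$ is the correct way to pass from the germ-level tameness to a uniform $\eta$. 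One small point worth tightening: for the independence-of-choices clause you invoke ``radial shrinking'', which tacitly needs $M_\cW(G)\cap S^{m-1}_r\cap G^{-1}(B^p_{\eta'}\setminus\{0\})=\emptyset$ for all $r\in[\e',\e]$; this follows by the same compactness argument applied to the annulus $\overline{B^m_\e}\setminus B^m_{\e'}$, but you should say so explicitly rather than leave it to the reader.
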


  We refer to \cite{JT} for examples and for the relation between \emph{tame} and the Thom regularity, namely it is shown in  \cite{JT}: \emph{if the Whitney stratification $\cW$  is Thom regular at all the strata  included in $G^{-1}(0)$  then $G$ is tame}.

%%%%%%%%%%%%%%%%
%%%%%%%%%%%%%%%
\section{Tamely composable maps}\label{s:tamely}
%%%%%%%
Several problems arise if one wants that the composition of map germs $H := G\circ F$  has a tube fibration.
First of all we need to choose stratifications such that we have a convenient junction of $F$ with $G$.

%%%%%%%%
 \subsection{Construction of regular stratifications adapted to the composition of maps} \label{ss:stratif}\ 
 
Let $F:(\bR^m,0)\rightarrow (\bR^{p},0)$ and  $G: (\bR^{p},0)\rightarrow (\bR^k,0)$ be map germs that we want to compose.
Let $(\cW',\cQ')$ be a regular stratification of $F$ (Definition \ref{d:regularstr}), where 
$\cW'$ is a Whitney (b)-regular stratification at $0\in \bR^{m}$.
We assume that $F$ is locally open,  and that $F$ is tame with respect to $\cW'$. 
The construction is done in several steps.

\smallskip
  
\noindent \underline{Step 1.} We refine $\cQ'$ to a Whitney (b)-regular stratification at $0\in \bR^{p}$, denoted by $\cQ$,  such that the restriction of $G$ to each stratum of $\cQ$
has constant rank and that $\Sing(G)$ is a union of strata. This implies that $\cQ$ is a stratification of the source of $G$. 
%we did not speak yet about the target!

\smallskip

\noindent \underline{Step 2.} 
We consider the pull-back of the strata of $\cQ$ by $F$, and we obtain  a refinement $\cW$ of $\cW'$.
On one hand this refinement preserves the property of Whitney (b)-regularity, and on the other hand $F$ remains tame 
  with respect to $\cW$ because the new strata are pull-backs by $F$ of submanifolds in the target. Indeed, let $P\subset \bR^p$ and $Q\subset \bR^q$ be submanifolds, let $f:P\to Q$ be a submersion, and $S\subset Q$ a submanifold. 
 Let  $f_|:f^{-1}(S)\to S$ denote the restriction of $f$ to the pull-back of $S$. Then the Milnor set $M(f_|)$ is included in $M(f)\cap f^{-1}(S)$. % 

\smallskip

\noindent \underline{Step 3.}
We have already shown in Step 2 that $(\cW, \cQ)$ is a regular stratification of $F$.
Now we may construct a stratification $\cS$ at $0\in \bR^{k}$ by the constant rank criterion for the map $G$ as done in Definition \ref{d:regularstr},  such that  $(\cQ, \cS)$ is a regular stratification of $G$.
It then follows that $(\cW, \cS)$ is a regular stratification of $H$.  

\

We have constructed regular stratifications for $F$, $G$ and $H$ adapted to the composition $H = G\circ F$.
From now and until the end we assume that our maps are endowed with such regular stratification adapted to the composition.

  \

 One of the problems, already observed in \cite{CT},  is that the composition of tame maps is not necessarily a tame map. 
 Here is a simple such example.
 \begin{example}
 Let 
 $F:(\bR^4,0)\to(\bR^3,0)$, 
\[F(x,y,u,v)=\bigl((x^2+y^2)(1+u),(x^2+y^2)v, u^2+v^2\bigr). \]
Then $F^{-1}(0)=\{0\}$ and hence $F$ is tame. Let  $G:(\bR^3,0)\to(\bR^2,0)$ be the projection  $G(r,s,t)=(r,s)$, which is also tame.
We get the composition:
\[(G\circ F)(x,y,u,v)=\bigl((x^2+y^2)(1+u),(x^2+y^2)v\bigr),\] 
which was shown in \cite[Example 4.10]{JT} to be not tame.

Compare to Corollary \ref{r:partic} and check that condition \eqref{eq:inclsing} is not fulfilled.
\end{example}

We introduce a new natural condition in the spirit of \eqref{eq:main2}:

% we do not need here the Whitney property 
\begin{definition}[\emph{Tamely composable maps}]\label{d:F-rhoreg}
 Let $F:(\bR^m,0)\rightarrow(\bR^p,0)$ and $G:(\bR^p,0)\rightarrow(\bR^k,0)$, $m,p,k >0$, be analytic map germs, and consider the composition $H= G\circ F :(\bR^m,0)\rightarrow(\bR^k,0)$.   We say that \emph{$F$ is tamely composable with $G$}  iff:
\begin{equation}\label{eq:incl3}
 \overline{M_{\cW}(H)\setminus H^{-1}(0)}\cap H^{-1}(0) \subset  F^{-1}(0).
\end{equation}
\end{definition}

\smallskip

\begin{remark}\label{r:tame}
\begin{enumerate}
 \item  One observes that condition  \eqref{eq:incl3} is trivially implied by the tameness of $H$. 
\item Condition \eqref{eq:incl3} is equivalent to the following:
\begin{equation}\label{eq:incl37}
 \overline{M_{\cW}(H)\setminus H^{-1}(0)}\cap H^{-1}(0)\cap \Sing H \subset  F^{-1}(0)
\end{equation}
because the left hand side terms of   \eqref{eq:incl3} and of \eqref{eq:incl37} are equal.
Indeed, this is an immediate consequence of the inclusion \eqref{r:milnorset} applied to the map $H$, namely:
\begin{equation}\label{r:milnorset2}
 M_{\cW}(H)\cap H^{-1}(0) \subset H^{-1}(0)  \cap\Sing H.
\end{equation}
\item Condition \eqref{eq:incl3} is also equivalent to the following:
\begin{equation}\label{eq:incl38}
  \overline{F(M_{\cW}(H)) \m  G^{-1}(0)} \cap G^{-1}(0) \subset \{0\}.
\end{equation}
Indeed, by taking the image by $F$ we obtain the implication \eqref{eq:incl3} $\Rightarrow$ \eqref{eq:incl38}. To show the converse, we  take the inverse image of the inclusion \eqref{eq:incl38}. Then the left hand side of the lifted inclusion contains the left hand side of \eqref{eq:incl3}, and this is enough for show that \eqref{eq:incl3} hold too.

In particular, if we take $F = \id$ in \eqref{eq:incl3} or in \eqref{eq:incl38},  then we get back our condition \eqref{eq:main2}.
\end{enumerate}
\end{remark}

\medskip

While condition  \eqref{eq:incl3} does not imply that $G$ is tame, nor the other way around, we prove that this is what we need to add up such that $H$ behaves well:
%%%%%

\begin{theorem}\label{t:composed}
Let $F:(\bR^m,0)\rightarrow(\bR^p,0)$ and $G:(\bR^p,0)\rightarrow(\bR^k,0)$, $m\geq p\geq k\geq 2$, be analytic map germs such that $F$ is tame, and that
% and $G$ has Milnor tube fibration {\color{cyan}- but this latter seems not used!}.
 $F$ is tamely composable with $G$.

Then the map germ $H = G\circ F$ is tame,  and has a singular tube fibration.
\end{theorem}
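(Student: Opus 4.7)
The plan is to reduce the statement to showing that $H = G \circ F$ is tame with respect to the stratification $\cW$ constructed in Section~\ref{ss:stratif}, after which Theorem~\ref{t:tube} immediately yields the singular Milnor tube fibration.

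The key technical step is the Milnor-set inclusion
\[
M_\cW(H) \;\subset\; M_\cW(F).
\]
For any stratum $W \in \cW$, the construction in Section~\ref{ss:stratif} ensures that $F|_W : W \to Q$ and $G|_Q : Q \to S$ are submersions onto their image strata $Q = F(W) \in \cQ$ and $S = H(W) \in \cS$, so $H|_W : W \to S$ is also a submersion. By the chain rule $dH|_W = dG|_Q \circ dF|_W$, every row of $dH|_W$ is a linear combination of the rows of $dF|_W$, and hence the row span of $dH|_W$ is contained in that of $dF|_W$. If $x \in M(H|_W)$, then $d\rho|_W(x)$ lies in the row span of $dH|_W(x)$, and therefore also in the row span of $dF|_W(x)$, which places $x$ in $M(F|_W)$.

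With this inclusion, tameness of $H$ reduces to a short argument combining the two hypotheses. Take any $x_0 \in \overline{M_\cW(H) \setminus H^{-1}(0)} \cap H^{-1}(0)$. The tamely-composable hypothesis (Definition~\ref{d:F-rhoreg}) places $x_0$ in $F^{-1}(0)$; the inclusion $F^{-1}(0) \subset H^{-1}(0)$ together with $M_\cW(H) \subset M_\cW(F)$ gives
\[
M_\cW(H) \setminus H^{-1}(0) \;\subset\; M_\cW(F) \setminus F^{-1}(0),
\]
so that $x_0 \in \overline{M_\cW(F) \setminus F^{-1}(0)} \cap F^{-1}(0) \subset \{0\}$ by tameness of $F$. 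Hence $x_0 = 0$, $H$ is tame, and Theorem~\ref{t:tube} applies. I expect the main subtlety to lie in the careful handling of the Milnor sets in the stratified context: each restriction $H|_W$ must be regarded as a submersion onto its image stratum, so that the singular-set term in the definition of $M(H|_W)$ disappears and only the rank condition remains, which is exactly the condition to which the chain-rule argument applies. This is the same stratified mechanism already exploited in Section~\ref{ss:stratif} to see that the tameness of $F$ is preserved under the pullback refinement of the stratification.
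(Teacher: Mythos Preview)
Your proof is correct and follows essentially the same route as the paper: both arguments hinge on the Milnor-set inclusion $M_\cW(H)\subset M_\cW(F)$ (the paper obtains it ``by comparing the corresponding Jacobian matrices'', which is exactly your chain-rule row-span argument) and then combine the tamely-composable hypothesis with the tameness of $F$ to force $\overline{M_\cW(H)\setminus H^{-1}(0)}\cap H^{-1}(0)\subset\{0\}$. Your pointwise organisation is slightly cleaner than the paper's chain of set inclusions, and your justification of the Milnor-set inclusion via the stratified-submersion structure of $(\cW,\cQ,\cS)$ is more explicit than what the paper writes.
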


\begin{proof}
% By comparing the corresponding Jacobian matrices, we deduce that the conditions $\Sing F \subset F^{-1}(0)$ and $\Sing %G\subset G^{-1}(0)$ imply $\Sing H \subset F^{-1}(G^{-1}(0))=H^{-1}(0)$, thus $H$ has isolated critical value.
We obviously have  $F^{-1}(0)\subset H^{-1}(0)$. By comparing the corresponding Jacobian matrices we deduce the inclusions $M_{\cW}(H)\subset M_{\cW}(F)$. We point out that this inclusion is also due to the choice of regular stratifications adapted to the composition of maps, as we have assumed above.  See also Remark \ref{r:tame}. 
% what more we can say here in order to explain this?????

%{\color{red} Remark here that in case we refine the Whitney stratification $\cW$ to some Whitney (b)-regular $\cW''$, 
%then the same inclusion of Milnor sets with respect to $\cW''$ holds.}

By using these two inclusions, we obtain:
\begin{equation}\label{eq:incl2}
M_{\cW}(H)\setminus H^{-1}(0) \subset M_{\cW}(F)\setminus H^{-1}(0)\subset M_{\cW}(F)\setminus F^{-1}(0).
\end{equation}

Taking closures in the first inclusion of \eqref{eq:incl2}, and intersecting with $H^{-1}(0)$, we obtain the first inclusion in:
\begin{equation}\label{eq:incl4}  \overline{M_{\cW}(H)\setminus H^{-1}(0)}\cap H^{-1}(0) \subset  \overline{M_{\cW}(F)\setminus H^{-1}(0)}\cap H^{-1}(0)\subset  \overline{M_{\cW}(F)\setminus H^{-1}(0)}\cap F^{-1}(0)
\end{equation}
 whereas the second inclusion is a direct consequence of  \eqref{eq:incl3}.

From the last inclusion in \eqref{eq:incl2}, by taking closures,   we get:
\begin{equation}\label{eq:incl5}
\overline{M_{\cW}(F)\setminus H^{-1}(0)}\cap F^{-1}(0) \subset \overline{M_{\cW}(F)\setminus F^{-1}(0)}\cap F^{-1}(0).
\end{equation}
 Chaining together the above inclusions we obtain:
\begin{equation}\label{eq:incl6} \overline{M_{\cW}(H)\setminus H^{-1}(0)}\cap H^{-1}(0) \subset \overline{M_{\cW}(F)\setminus F^{-1}(0)}\cap F^{-1}(0),
\end{equation}
which shows  that the tameness of $F$ implies the tameness of $H$.

  We may now use Theorem \ref{t:tube} to conclude that the map  $H$ has a tube fibration.
 %{\color{red} Strangely, we do not use the condition that $G$ has a tube fibration. If so, then $G$ can be any map, %but condition \eqref{eq:F-rhoreg} will probably not hold for any $G$ even if $F$ is very good.}
\end{proof}
In case $F$ is tame, its discriminant $\Disc F := F(\Sing F)$ is a well-defined subanalytic set germ.
 The following consequence recovers the setting of \cite[Theorem 3.2]{CT}, where $F$ was tame, with isolated singular value, and $G$ had an isolated singular point at the origin:
\begin{corollary}\label{r:partic}
Let $F:(\bR^m,0)\rightarrow(\bR^p,0)$ and $G:(\bR^p,0)\rightarrow(\bR^k,0)$, $m\geq p\geq k\geq 2$, be analytic map germs such that $F$ is tame and that 
\begin{equation}\label{eq:inclsing}
(\Disc F  \cup \Sing G)  \cap  G^{-1}(0)  \subset  \{ 0\}
\end{equation}

Then the map germ $H = G\circ F$ is tame,  and has a singular tube  fibration.
\end{corollary}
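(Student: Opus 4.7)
The plan is to reduce Corollary \ref{r:partic} to Theorem \ref{t:composed} by verifying that the hypothesis \eqref{eq:inclsing} implies $F$ is tamely composable with $G$, i.e., that \eqref{eq:incl3} holds. Note that since $F$ is tame by hypothesis, Theorem \ref{main-new} ensures $\Disc F$ is a well-defined closed subanalytic set-germ at $0$, so each of the two inclusions packed into \eqref{eq:inclsing}, namely $\Disc F \cap G^{-1}(0) \subset \{0\}$ and $\Sing G \cap G^{-1}(0) \subset \{0\}$, is meaningful.

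The decisive intermediate step is the strong inclusion $\Sing H \cap H^{-1}(0) \subset F^{-1}(0)$, which is proved by a chain-rule dichotomy. For $x \in \Sing H \cap H^{-1}(0)$, one has $F(x) \in G^{-1}(0)$ and $\rank_x H < k$; since $dH_x = dG_{F(x)} \circ dF_x$ and $p \ge k$, this forces either $x \in \Sing F$ (hence $F(x) \in \Disc F$) or $F(x) \in \Sing G$. In either case, \eqref{eq:inclsing} forces $F(x) = 0$, i.e., $x \in F^{-1}(0)$.

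To deduce \eqref{eq:incl3}, one combines the above with the closedness of $M_{\cW}(H)$ (from Whitney (a)-regularity, noted after Definition \ref{d:Mstr}) and the inclusion \eqref{r:milnorset} applied to $H$:
$$
\overline{M_{\cW}(H)\setminus H^{-1}(0)}\cap H^{-1}(0) \;\subset\; M_{\cW}(H)\cap H^{-1}(0) \;\subset\; \Sing H \cap H^{-1}(0) \;\subset\; F^{-1}(0).
$$
This shows $F$ is tamely composable with $G$, and Theorem \ref{t:composed} then delivers both the tameness of $H$ and the existence of the singular tube fibration.

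The heavy lifting already sits in Theorem \ref{t:composed}, so I do not expect a real obstacle here. The one delicate point is to apply the chain-rule case split to $\Sing H$ in the unstratified sense, so that the two separate parts of \eqref{eq:inclsing} — the one on $\Disc F$ and the one on $\Sing G$ — are each consumed in the appropriate case.
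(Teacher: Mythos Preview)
Your proof is correct and follows essentially the same route as the paper: both verify \eqref{eq:incl3} via the chain of inclusions $\overline{M_{\cW}(H)\setminus H^{-1}(0)}\cap H^{-1}(0) \subset \Sing H \cap H^{-1}(0) \subset F^{-1}(0)$, the first by \eqref{r:milnorset} (together with the closedness of $M_{\cW}(H)$, which you make explicit) and the second from \eqref{eq:inclsing}, then invoke Theorem \ref{t:composed}. Your chain-rule dichotomy spells out the step the paper merely asserts, namely that \eqref{eq:inclsing} forces $\Sing H \cap H^{-1}(0) \subset F^{-1}(0)$.
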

\begin{proof} 
Condition \eqref{eq:inclsing} implies the inclusion $\Sing H \cap H^{-1}(0) \subset  \Sing H \cap F^{-1}(0)$. We then get :
$$\overline{M_{\cW}(H)\setminus H^{-1}(0)}\cap H^{-1}(0) \subset \Sing H \cap H^{-1}(0) \subset  \Sing H \cap F^{-1}(0)  \subset F^{-1}(0),$$
whereas the first inclusion follows from  \eqref{r:milnorset}, and the third is trivial. This shows that condition \eqref{eq:incl3} holds. 
 
We may now apply Theorem \ref{t:composed} and get the desired conclusion.
\end{proof}
%%%
%\begin{remark}\label{r:Thom}
%As we have seen in the above proof, condition \eqref{eq:inclsing} implies the inclusion $\Sing H \cap H^{-1}(0) \subset  \Sing %H \cap F^{-1}(0)$. However it is not necessary.  Check \cite{PT}..........
%\end{remark}

%%%%%
\begin{remark}\label{r:tame-examples}
%$F$ defines an ICIS,
  If $\Sing F\cap F^{-1}(0) = \{ 0\}$ then $F$ is tame. Indeed, we have $\Sing F\cap F^{-1}(0) = \{ 0\}$, which implies that $F^{-1}(0) \m \{ 0\}$ is a Thom $a_{F}$-regular stratum. It is well-known that the Milnor set $M(F)$ does not intersect
the  positive dimensional Thom $a_{F}$-regular strata of $F^{-1}(0)$, see e.g. the proof of \cite[Proposition 4.2]{ART}.  

 If $\Sing F\cap F^{-1}(0) = \{ 0\}$ and $F^{-1}(0) \neq \{ 0\}$, it  follows  from \cite[Proposition 2.4]{ART} that $F$ is an open map germ. See also \cite[Proposition 2.4]{JT}. 
\end{remark}

%%%%%%%%%%%%%%%%

\begin{corollary}\label{c:function}
For $\bK := \bR$ or $\bC$, let $F:(\bK^m,0)\rightarrow(\bK^p,0)$ be an analytic tame map germ, and let $G:(\bK^p,0)\rightarrow(\bK,0)$ be an analytic function germ. Then $H = G\circ F$ is  tame and $F$ is tamely composable with $G$.
\end{corollary}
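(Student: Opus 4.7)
The plan is to prove the stronger statement that $H=G\circ F$ is itself tame with respect to the composition-adapted stratification; the tamely composable inclusion \eqref{eq:incl3} then follows for free, since $\{0\}\subset F^{-1}(0)$. The strategy is to deduce tameness of $H$ from Thom regularity, via the implication recalled right after Theorem~\ref{t:tube}.

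First I would make one extra choice in the construction of Section~\ref{ss:stratif}: in Step~1, refine $\cQ$ so that it is furthermore Thom $a_G$-regular at every stratum contained in $G^{-1}(0)$. Such a refinement exists because $G$ is an analytic function germ and Thom regular stratifications for analytic function germs always exist (Thom--Mather). The stratification $\cW$ on the source of $F$ is then, as in Step~2, the pull-back of $\cQ$ by $F$, still a Whitney $(b)$-regular stratification on which $F_{|W}\colon W\to Q$ is a submersion for each stratum $W\in\cW$ mapping to $Q\in\cQ$.

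The decisive step is to verify that this $\cW$ is Thom $a_H$-regular at every stratum $W_0\subset H^{-1}(0)$. Fix $W\in\cW$ with $W_0\subset\overline W$, $W\not\subset H^{-1}(0)$, and a sequence $x_n\in W$, $x_n\to x_0\in W_0$. Set $y_n=F(x_n)$, $y_0=F(x_0)$, $Q=F(W)$ and $Q_0=F(W_0)\subset G^{-1}(0)$. By Thom $a_G$ at $Q_0$, after extracting a subsequence $\ker dG_{|Q}(y_n)$ converges to some $\sigma\supset T_{y_0}Q_0$. Since $F_{|W}$ is a submersion onto $Q$,
\[
\ker dH_{|W}(x_n)\;=\;dF_{|W}(x_n)^{-1}\bigl(\ker dG_{|Q}(y_n)\bigr),
\]
and a standard pull-back argument shows that $\lim\ker dH_{|W}(x_n)\supset T_{x_0}W_0$: any $v\in T_{x_0}W_0$ lifts, via Whitney $(a)$ of $\cW$ along $W_0$, to a sequence $v_n\in T_{x_n}W$ with $v_n\to v$; then $dF_{|W}(x_n)v_n\to dF_{|W_0}(x_0)v\in T_{y_0}Q_0\subset\sigma$, and a small correction within the fibres of the submersion $dF_{|W}(x_n)$ yields $u_n\in\ker dH_{|W}(x_n)$ with $u_n\to v$.

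Having established Thom $a_H$-regularity of $\cW$ at every stratum of $H^{-1}(0)$, the remark closing Section~\ref{s:sing} gives that $H$ is tame with respect to $\cW$. Consequently
\[
\overline{M_\cW(H)\setminus H^{-1}(0)}\cap H^{-1}(0)\subset\{0\}\subset F^{-1}(0),
\]
which is exactly the tamely composable condition \eqref{eq:incl3}. The main technical obstacle is the bounded-correction step in the pull-back argument: since the rank of $dF_{|W}$ generically drops as $x_n\to x_0\in W_0$, ensuring that $u_n-v_n\to 0$ requires control on the pseudo-inverse of $dF_{|W}(x_n)$. I would handle this by working in a local product trivialisation of the submersion $F_{|W}\colon W\to Q$ around $x_0$, which makes the correction explicit.
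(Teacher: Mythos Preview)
Your strategy is correct in outline but takes a substantially longer route than the paper, and the final technical step has a gap.

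The paper's proof is a one-liner: it applies Hironaka's theorem directly to the composed map $H$. Since $H:(\bK^m,0)\to(\bK,0)$ is itself an analytic \emph{function} germ, Hironaka guarantees the existence of a Thom $a_H$-regular Whitney stratification of its source (which one may take to refine any given stratification). By the implication recalled after Theorem~\ref{t:tube}, this gives tameness of $H$ outright, and the tamely composable inclusion follows by Remark~\ref{r:tame}(a). There is no need to pass through a Thom stratification of $G$ and pull it back.

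Your approach instead establishes Thom $a_G$-regularity for $\cQ$ and then attempts to \emph{transport} it to Thom $a_H$-regularity of the pull-back $\cW$ via the stratified submersion $F$. This is a reasonable and more structural statement, but the correction step you flag is a genuine difficulty, and your proposed fix does not work as stated: you suggest using ``a local product trivialisation of the submersion $F_{|W}\colon W\to Q$ around $x_0$'', yet $x_0\in W_0\subset\overline{W}\setminus W$, so there is no neighbourhood of $x_0$ in $W$ on which to trivialise. Controlling the pseudo-inverse of $dF_{|W}(x_n)$ as $x_n$ leaves $W$ is precisely the delicate point, and a trivialisation internal to $W$ cannot supply it. One can rescue the argument by working in ambient coordinates on $\bR^m$ and $\bR^p$, using the continuity of the full differential $dF(x)$ together with Whitney~(a) for both $\cW$ and $\cQ$; but this needs to be written out, and once you see that Hironaka applies directly to $H$, the detour becomes unnecessary.
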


\begin{proof}
Our composed map $H = G\circ F: (\bK^m,0)\rightarrow(\bK,0)$ is an analytic function, and any analytic function is tame.
 Indeed, this is due to the existence of a  Thom $a_{H}$-regular stratification of
$H^{-1}(0)\setminus\{0\}$, which holds for any analytic function, as proved by Hironaka \cite{Hi}. In turn, this implies the $\rho$-regularity of $H$, and since $H$ has isolated critical value, it follows that $H$ is tame. 

Moreover, $F$ is  tamely composable with $G$ simply because the tameness of $H$ trivially implies the
 tamely composable condition \eqref{eq:incl3}, cf. Remark \ref{r:tame}(a).
\end{proof}

\begin{example}\label{c:icis}
Among the classes of maps $F$ which are tame, and  hence verify Corollary \ref{c:function}, are: holomorphic maps $F: (\bC^{n},0) \to (\bC^{k},0)$ defining an ICIS; real analytic maps $F$ such that $\Sing F \cap F^{-1}(0) \subset \{0\}$.
\end{example}

\

%%%%%%%%%%%%%%%%%%%%%
\section{What is the fibre of a composed map?}\label{topology}
%%%%%%%%

 We  describe the topology of the fibre of $H = G\circ F$ in case the maps $F$ and $G$  are tamely composable.
Let $F:(\bR^m,0)\rightarrow(\bR^p,0)$ and $G:(\bR^p,0)\rightarrow(\bR^k,0)$, $m\geq p\geq k\geq 2$.   In \S \ref{ss:stratif} we have constructed regular stratifications $(\cW,\cQ)$ for $F$, and $(\cQ,\cS)$ for $G$,  adapted to the composition $H= G\circ F$. We continue to assume that $F$ and $G$ are endowed with regular stratifications adapted to the composition.

\begin{remark}\label{r:tame2}
  If $F$ is tame before the construction  \S \ref{ss:stratif}, then $F$ is also tame with respect to the
newly constructed stratification $\cW$. This is due to the fact that the strata are all pull-backs by $F$ of strata of the target of $F$ (intersected with the appropriate strata of the source). This fact has been used in the beginning of the proof of Theorem \ref{t:composed} for proving the inclusion of Milnor sets.
However, the difference is made by $G$, where if one introduces new strata in its source, these might also introduce new branches of the Milnor set (which are not anymore pull-backs). Therefore we have to assume that $G$ is tame with respect to this final stratification of its source.
\end{remark}

%%%%%
\subsection{Choice of Milnor data}\label{ss:Milnordata}
%%%%%%%%%%%%%%%
 
By Theorem \ref{t:tube}, both $F$ and $G$  have singular tube fibrations. Let us give the details in the following.

We choose appropriate Milnor data for the singular tube fibrations, as follows. Let $\e_{2}>0$ be the maximum Milnor ball for $F$, and let $\delta_{2}$ be the maximum Milnor ball for $G$.  Let then $0<\varepsilon_{0}<\varepsilon_{1}<\e_{2}$ and $0<\delta_{0}<\delta_{1}<\delta_{2}$ such that, for $i=0,1$:
%%%%
\begin{equation}\label{eq:fib1}
 F_{\mid}: B^{m}_{\varepsilon_{i}}\cap F^{-1}(B^{p}_{\delta_{i}}\setminus\{0\})\rightarrow B^{p}_{\delta_{i}}\setminus\{0\}
\end{equation}
is a stratified locally trivial fibration, and moreover, by choosing some $0<\eta\ll\delta_{0}$, that the restrictions:
\begin{equation}\label{eq:fib2}
 G_{\mid}: B^{p}_{\delta_{i}}\cap G^{-1}(B^{k}_{\eta}\setminus\{0\})\rightarrow B^{k}_{\eta}\setminus\{0\},
\end{equation}
are  singular tube fibrations of $G$.
%%%%%%%%%%%%%

Assuming in addition that the maps $F$ and $G$  are tamely composable, Theorem \ref{t:composed} tells that the composition $H = G\circ F$ has a singular tube fibration with respect to the regular stratification $(\cW,\cS)$, more precisely, that the restrictions:
\begin{equation}\label{eq:fib31}
H_{\mid}:B^{m}_{\varepsilon_{i}}\cap H^{-1}(B^{k}_{\eta}\setminus\{0\})\rightarrow B^{k}_{\eta}\setminus\{0\}
\end{equation}
are locally trivial stratified fibration, for $i=0,1$, and that these two fibrations are stratified isotopic. In particular their fibres are stratified homeomorphic. We may thus use a single notation $\Fib(H_{|V;S})$ for 
 a fibre of $H^{-1}(a)\cap V$ on a stratum $V\in \cW$ over some point $a\in S$ of a stratum $S\in\cS$.  
 
  Let us remark that the fibre $H^{-1} (a)$ of the fibration of $H$ (as provided by Theorem \ref{t:composed}) over some stratum $S\in \cS$ is a singular stratified set. More precisely,  we have the following decomposition of the fibre of $H$ over  some point $a\in S\cap B^{k}_{\eta_{0}}$: 
\begin{equation}\label{eq:pieces}
 B^{m}_{\varepsilon_{i}}\cap H^{-1} (a) = \bigsqcup_{V\in \cW} \Fib(H_{|V;S}),
\end{equation}
 where  $\Fib(H_{|V;S}) := B^{m}_{\varepsilon_{i}}\cap V\cap H^{-1} (a)$, for $i=0, 1$.

%%%%%%%%%%%%%%%%
 
 Our following result describes the topology of each piece $\Fib(H_{|V;S})$.

%%%%%%
\begin{theorem}\label{t:Fib}
 Let  $(\cW,\cQ)$ and $(\cQ, \cS)$ be regular stratifications of $F$ and $G$, respectively, adapted to the composition $H= F\circ G$. Let $F$ and $G$ be tame, and let $F$ be locally open, and tamely composable with $G$.
 
Then, for any $V\in \cW$, $S\in \cS$,
 $\Fib(H_{|V;S})$ is homotopy equivalent to the total space of a locally trivial fibration of fibre
$\Fib(F_{|V,F(V)})$  over the base space $\Fib(G_{| F(V);S})$.
\end{theorem}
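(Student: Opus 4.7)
The plan is to realize $\Fib(H_{|V;S})$, up to homotopy, as the total space obtained by pulling back the Milnor tube fibration of $F$ along the inclusion $\Fib(G_{|F(V);S}) \hookrightarrow F(V)$ sitting inside a common Milnor ball. I would first fix the Milnor data $0 < \varepsilon_{1} \ll \delta_{1} \ll \eta$ from \S\ref{ss:Milnordata} so that $F(B^{m}_{\varepsilon_{1}}) \subset B^{p}_{\delta_{1}}$ (possible after shrinking $\varepsilon_{1}$), and write, for any regular value $a \in S \cap B^{k}_{\eta}$,
\begin{equation*}
V \cap B^{m}_{\varepsilon_{1}} \cap H^{-1}(a) \;=\; V \cap B^{m}_{\varepsilon_{1}} \cap F^{-1}\bigl(G^{-1}(a) \cap F(V) \cap B^{p}_{\delta_{1}}\bigr),
\end{equation*}
using $V \subset F^{-1}(F(V))$ and the ball inclusion. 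The left-hand side is stratified homeomorphic, hence homotopy equivalent, to $\Fib(H_{|V;S})$ by the isotopy between the tube fibrations of $H$ for $\varepsilon_{0}$ and $\varepsilon_{1}$ supplied by Theorem \ref{t:composed}.

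Next I would invoke the regular stratification $(\cW, \cQ)$ of $F$ adapted to the composition: $F(V)$ is a single stratum of $\cQ$ and $F|_{V}\colon V \to F(V)$ is a submersion. Theorem \ref{t:tube} applied to $F$ then yields that the restriction
\begin{equation*}
F_{|}\colon V \cap B^{m}_{\varepsilon_{1}} \cap F^{-1}(F(V) \cap B^{p}_{\delta_{1}}) \;\longrightarrow\; F(V) \cap B^{p}_{\delta_{1}}
\end{equation*}
is a locally trivial fibration with fiber $\Fib(F_{|V, F(V)})$. Because $F$ is locally open, addressing Problem~(1) from the introduction, the set $F(V) \cap G^{-1}(a) \cap B^{p}_{\delta_{1}}$ coincides (as a set-germ) with the base $\Fib(G_{|F(V);S})$ of the Milnor tube fibration of $G$ obtained from Theorem \ref{t:tube} applied to $G$ relative to $(\cQ, \cS)$ and restricted to the stratum $F(V)$.

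Restricting the fibration of $F$ above over the subset $\Fib(G_{|F(V);S}) \hookrightarrow F(V) \cap B^{p}_{\delta_{1}}$ then gives, by standard pullback of locally trivial fibrations, a locally trivial fibration
\begin{equation*}
V \cap B^{m}_{\varepsilon_{1}} \cap F^{-1}(\Fib(G_{|F(V);S})) \;\longrightarrow\; \Fib(G_{|F(V);S})
\end{equation*}
with fiber $\Fib(F_{|V, F(V)})$. The total space equals $V \cap B^{m}_{\varepsilon_{1}} \cap H^{-1}(a)$ by the identity displayed in the first paragraph, and is therefore homotopy equivalent to $\Fib(H_{|V;S})$, proving the claim.

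The main obstacle is coordinating the Milnor radii of $F$, $G$, and $H$ so that all three tube fibrations sit compatibly inside a single Milnor ball, and verifying that the local openness of $F$ actually forces the set-germ identity $F(V) \cap G^{-1}(a) \cap B^{p}_{\delta_{1}} = \Fib(G_{|F(V);S})$ rather than only an inclusion; this step is the technical heart of the argument and cleanly isolates where the new hypothesis of locally open $F$ is needed beyond the tameness and tamely composable assumptions of Theorem \ref{t:composed}.
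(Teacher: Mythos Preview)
Your strategy has a genuine gap in the coordination of Milnor radii, and it is not merely a technical nuisance but the whole point of the argument.

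You want to shrink $\varepsilon_{1}$ so that $F(B^{m}_{\varepsilon_{1}})\subset B^{p}_{\delta_{1}}$, which gives you the set equality
\[
V\cap B^{m}_{\varepsilon_{1}}\cap H^{-1}(a)\;=\;V\cap B^{m}_{\varepsilon_{1}}\cap F^{-1}\bigl(G^{-1}(a)\cap B^{p}_{\delta_{1}}\bigr).
\]
But once $\varepsilon_{1}$ has been shrunk this way, the pair $(\varepsilon_{1},\delta_{1})$ is \emph{no longer} valid Milnor data for the tube fibration of $F$: Theorem \ref{t:tube} only provides a fibration over a ball $B^{p}_{\delta'}$ with $\delta'\ll\varepsilon_{1}$, hence $\delta'\ll\delta_{1}$. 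So your displayed map
\[
F_{|}:V\cap B^{m}_{\varepsilon_{1}}\cap F^{-1}(F(V)\cap B^{p}_{\delta_{1}})\longrightarrow F(V)\cap B^{p}_{\delta_{1}}
\]
is not known to be locally trivial; the boundary sphere $\partial B^{m}_{\varepsilon_{1}}$ need not be transversal to $F^{-1}(b)$ for $\delta'<|b|<\delta_{1}$. Conversely, if you keep $(\varepsilon_{1},\delta_{1})$ as honest Milnor data for $F$ (so $\delta_{1}\ll\varepsilon_{1}$), the set equality above fails, because $H^{-1}(a)\cap B^{m}_{\varepsilon_{1}}$ can meet $F^{-1}\bigl(B^{p}_{\delta_{1}}\bigr)^{c}$. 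Your chain of inequalities ``$\varepsilon_{1}\ll\delta_{1}\ll\eta$'' is in fact reversed from what the tube fibrations require, and this is a symptom of the incompatibility. Note also that you misplace the role of local openness: the identity $F(V)\cap G^{-1}(a)\cap B^{p}_{\delta_{1}}=\Fib(G_{|F(V);S})$ is definitional and needs no hypothesis; local openness is used to produce an inclusion $B^{p}_{\delta}\hookrightarrow F(B^{m}_{\varepsilon})$ from \emph{below}.

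The paper's proof is designed precisely to get around this. It keeps two levels of Milnor data $(\varepsilon_{0},\delta_{0})$ and $(\varepsilon_{1},\delta_{1})$, uses local openness of $F$ to sandwich
\[
B^{p}_{\delta_{0}}\hookrightarrow F(B^{m}_{\varepsilon_{0}})\hookrightarrow B^{p}_{\delta_{1}}\hookrightarrow F(B^{m}_{\varepsilon_{1}}),
\]
and then runs a four-term inclusion argument (together with the long exact sequences of the two honest $F$-fibrations and \cite[Lemma 3.2]{Ti}) to show that the middle inclusion $B^{m}_{\varepsilon_{0}}\cap H^{-1}(a)\hookrightarrow F^{-1}(B^{p}_{\delta_{1}})\cap B^{m}_{\varepsilon_{1}}\cap H^{-1}(a)$ is a homotopy equivalence. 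Only the right-hand side is literally the total space of an $F$-fibration over $\Fib(G)$; one never gets an equality with $\Fib(H)$, only a homotopy equivalence, and that is exactly what the sandwich buys.
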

% {\color{green} Instead of $G$ tame we may only assume that $G$ has a singular 
%tube fibration relative to the stratification $\cQ$.}

\begin{proof}
By our hypotheses, the map germs $F$, $G$ and $H$ have stratified tube fibrations, and we will  use the notations established above for these fibrations.  We decompose the proof of the theorem in three steps. In order to simplify the notations, we omit to write that all the spaces are intersected with either $V$ or $F(V)$, correspondingly.

 \smallskip

\noindent \textbf{Step 1}. By the fact that $F$ is locally open, and due to the choice of the Milnor data, we have the following inclusions:
$$B^{p}_{\delta_{0}}\hookrightarrow F(B^{m}_{\varepsilon_{0}})\hookrightarrow B^{p}_{\delta_{1}}\hookrightarrow F(B^{m}_{\varepsilon_{1}}).$$
By intersecting with the fibre $G^{-1}(a)$ for $0<\|a\|\ll\eta$, we get:

\begin{equation}\label{eq:Inclusion}
B^{p}_{\delta_{0}}\cap G^{-1}(a)\hookrightarrow F(B^{m}_{\varepsilon_{0}})\cap G^{-1}(a)\hookrightarrow 
B^{p}_{\delta_{1}}\cap G^{-1}(a)\hookrightarrow F(B^{m}_{\varepsilon_{1}})\cap G^{-1}(a).
\end{equation}

 \medskip

\noindent \textbf{Step 2}. We consider the following commutative diagram. In order to simplify the notations, we omit to write that all the spaces on the upper row are intersected with  $V$, and that  all the spaces on the lower row are intersected with $F(V)$.

\begin{displaymath}
    \xymatrix@C=.5em{     % .5  ca sa se incadreze in latime
     F^{-1}(B^{p}_{\delta_{0}})\!\cap\!    B^{m}_{\varepsilon_{0}}\!\cap\! H^{-1}(a) \ar@{^{(}->}[r] \ar[d]_F &
        B^{m}_{\varepsilon_{0}}\!\cap\! H^{-1}(a)  \ar@{^{(}->}[r]  & 
        F^{-1}(B^{p}_{\delta_{1}})\!\cap\!    B^{m}_{\varepsilon_{1}}\!\cap\! H^{-1}(a) \ar@{^{(}->}[r] \ar[d]_F & 
        B^{m}_{\varepsilon_{1}}\!\cap\! H^{-1}(a) 
        \\
       B^{p}_{\delta_{0}}\! \cap\! G^{-1}(a) \ar@{^{(}->}[r] & F(B^{m}_{\varepsilon_{0}}) \!\cap\! G^{-1}(a)) \ar@{^{(}->}[r] & B^{p}_{\delta_{1}}\! \cap\! G^{-1}(a) \ar@{^{(}->}[r] & F(B^{m}_{\varepsilon_{1}})\!\cap\! G^{-1}(a) 
    }
\end{displaymath}
where the horizontal arrows are inclusions, and the two vertical arrows are locally trivial fibrations defined by the corresponding restrictions of the map $F$, to be compared with \eqref{eq:fib1}.

We will prove that the inclusion in the middle above:
\begin{equation}\label{eq:homotop0}
B^{m}_{\varepsilon_{0}}\cap H^{-1}(a)  \hookrightarrow 
        F^{-1}(B^{p}_{\delta_{1}})\cap    B^{m}_{\varepsilon_{1}}\cap H^{-1}(a)
\end{equation}
 is a homotopy equivalence. 
 
Let us show that the inclusions:
\begin{equation}\label{eq:homotop1}
 \alpha : F^{-1}(B^{p}_{\delta_{0}})\cap    B^{m}_{\varepsilon_{0}}\cap H^{-1}(a)  \hookrightarrow 
        F^{-1}(B^{p}_{\delta_{1}})\cap    B^{m}_{\varepsilon_{1}}\cap H^{-1}(a)
\end{equation}
and
\begin{equation}\label{eq:homotop2}
\beta : B^{m}_{\varepsilon_{0}}\cap H^{-1}(a)  \hookrightarrow  B^{m}_{\varepsilon_{1}}\cap H^{-1}(a) 
\end{equation}
are homotopy equivalences. 

The inclusion \eqref{eq:homotop2} is a stratified homeomorphism since both sides are fibres of $H$ in the fibrations \eqref{eq:fib31}. It is the stratified homeomorphism well-defined by the flow produced by the distance function  when rescaling the balls in the framework of our chosen Milnor data.

In case of  the inclusion \eqref{eq:homotop1}, let us consider the square containing the two vertical arrows denoted by $F$  in the above commutative diagram. These are restrictions of the tube fibrations of $F$ over the fibre $G^{-1}(a)$, and have the same fibre, which is the fibre of $F$ in a tube fibration \eqref{eq:fib1}, respectively.   By using the long exact sequence of homotopy groups of the fibrations, and the morphism between them, we obtain that the inclusion \eqref{eq:homotop1} induces a weak homotopy equivalence. Since both spaces are
triangulable subanalytic sets, cf \L ojasiewicz \cite{Lo}, see also our Remark \ref{r:loja}, they are CW-complexes and therefore 
 it follows from Whithead's Theorem that the weak homotopy \eqref{eq:homotop1} is a homotopy equivalence.
 
 \medskip

\noindent \textbf{Step 3}.  By \cite[Lemma 3.2]{Ti}, if $\alpha$ and $\beta$ are homotopy equivalences in the 4-terms sequence of inclusions, then it follows that  the inclusion \eqref{eq:homotop0} in the middle is a homotopy equivalence too.

We conclude that the fibre $\Fib(H) =    B^{m}_{\varepsilon_{0}}\cap H^{-1}(a) $ is homotopy equivalent to the 
space $F^{-1}(B^{p}_{\delta_{1}})\cap    B^{m}_{\varepsilon_{1}}\cap H^{-1}(a)$ which is a locally trivial fibration with fibre $\Fib(F) = B^{m}_{\varepsilon_{1}}\cap F^{-1}(a)$ over a base space which is $\Fib(G) = B^{p}_{\delta_{1}}\cap G^{-1}(a)$. This ends our proof.
\end{proof}

%%%%%%%%%%%%%%%%%
\subsection{Application to Nemethi's setup \cite{Ne}}\label{ss:nemethi-case}
N\' emethi considers in \cite{Ne} the composition of a holomorphic function germ $G: (\bC^{2},0) \to (\bC, 0)$ with a holomorphic map germ $F =(f,g): (\bC^{n+1},0)\to (\bC^{2},0)$ which defines an ICIS.  Then its singular locus $\Sing F$ is 1-dimensional, and its discriminant  $\Delta = F(\Sing F)$ is a plane curve germ.

The composition $H = G\circ F$ is a holomorphic function. As remarked in Corollary \ref{c:icis}, it follows that $F$ is locally open, that $F$ is tamely composable with $G$, and so, by Theorem \ref{t:composed}, that $H$ is tame and has a singular tube fibration.

Let us show how Theorem \ref{t:Fib} recovers  Nemethi's \cite[Theorem A(a)]{Ne} on the topology type of the fibre of $H = G\circ F$ in this special setup.

The stratification of the target $\bC$ is trivial, with the origin $0$ and its complement as only strata.
%Let us describe the strata -- as set germs at 0 -- of the regular stratifications of $\bC^{n+1}$ and $\bC^{2}$, 
%more precisely those which enter in the description those which concern.
%
In $\bC^{2}$ the following strata are defined: the origin is the stratum of dimension 0, the branches of  $\Delta\m \{0\}$ and the branches of the singular set $\Sing G \subset G^{-1}(0)$ without the origin are the strata of dimension 1; the complement of all these is the stratum of dimension 2.  This stratification is Whitney (b)-regular, denoted by  $\cQ$ in the setup of \S\ref{ss:stratif}.
  At this point we may remark that $G$ has a singular tube fibration with respect to the stratification 
  $\cQ$. Indeed, since the Milnor set $M_{\cQ}(G)$ is a plane curve, the tameness conditions is trivially verified.  
  
 In $\bC^{n+1}$ we have $F^{-1}(\Sing G)$ as union of strata, and the  set $F^{-1}(\Delta)$ as a union of strata. The set $\Sing H \subset H^{-1}(0)$ is of dimension $n-1$ if $\Delta$ intersects $\Sing G$  at the origin only, and of dimension $n$ if this is not the case.   Let us remark that this stratification $\cW$ is Whitney (b)-regular, that $F$ is tame with respect to $\cW$ (see also Remark \ref{r:tame}),  and that $F$ and $G$ are tamely composable as one can easily verify. % I did not !
 
For describing the Milnor fibre $H^{-1}(a)$ we only need the strata of the  set $F^{-1}(\Delta)$ which are outside $H^{-1}(0)$.
  The fibre $\Fib(G)$ is a plane curve which intersects the 1-dimensional strata of $\cQ$ at a set of points, call it $A_{G}$, and let $B_{G}:=\Sing F \cap F^{-1}(A_{G})$. 
The fibre  $H^{-1}(a)$ intersects the open stratum $V := \bC^{n+1} \m F^{-1}(\Delta)$ and this is homotopy equivalent to the total space of a fibration with fibre $\Fib (F)$ and base space $\Fib(G) \m \Delta$. Over each point $a_{i}\in A_{G}= \Fib(G) \cap \Delta$ we have a  the fibre $B_{G}\cap F^{-1}(a_{i})$ of $F$ which has isolated singularities.  At each singular point $b_{ij}\in B_{G}\cap F^{-1}(a_{i})$, the fibre $F^{-1}(a_{i})$  is an ICIS of Milnor number denoted by $\mu_{ij}$. 

The fibre $B^{2n+2}_{\varepsilon_{0}}\cap H^{-1}(a) $ decomposes along strata of $\cW$ as in \eqref{eq:pieces}.
We will give now a description of the homotopy type of the entire fibre $B^{2n+2}_{\varepsilon_{0}}\cap H^{-1}(a)$, as follows.
We denote by $\cF$ the general fibre of $F$,  over some point exterior to the 1-dimensional strata of $\cQ$.
The total space of the $F$-fibration over $\Fib(G)$ has this $\cF$ as the generic fibre, and  has singular fibres over each point $a_{i}\in A_{G}$. In the classical theory of complex fibrations with isolated singularities,  the replacement of a generic fibre $\cF_{i} \simeq \cF$ by a singular fibre corresponds to the attaching over $\cF_{i}$ of  $\mu_{ij}$ cells of dimension $n$ for each ICIS singular point $b_{ij}\in B_{G}\cap F^{-1}(a_{i})$, in order to ``kill'' those $(n-1)$-cycles of $\cF_{i}$ which vanish at $b_{ij}$.

Applying our Theorem \ref{t:Fib} to the above setting of complex map germs, we get:

\begin{corollary}\cite[Theorem A(a)]{Ne}
 The Milnor fibre $\Fib(H)$ has the homotopy type of a space obtained from the total
space of a fibre bundle with base space $\Fib(G)$ and with fibre $\cF$
by attaching to it the total number of $N :=\sum_{b_{ij}\in B_{G}}   \mu_{ij}$ cells of dimension $n$.
\fin
\end{corollary}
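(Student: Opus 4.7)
The plan is to apply Theorem \ref{t:Fib} to the top-dimensional open stratum and then enlarge the resulting total space by the cells coming from the ICIS structure of $F$ at the critical points lying over $\Fib(G)$. First, I would single out the open stratum $V := \bC^{n+1} \setminus F^{-1}(\Delta)$ of $\cW$, whose $F$-image is the $2$-dimensional open stratum of $\cQ$. Applying Theorem \ref{t:Fib} to this pair of strata yields directly that $\Fib(H_{|V;\bC \setminus \{0\}})$ is homotopy equivalent to the total space $E$ of a locally trivial fibre bundle with base $\Fib(G) \setminus \Delta$ and fibre $\cF$; this is the ``generic'' part of $\Fib(H)$.

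Next, I would localise around the finite set $A_G = \Fib(G) \cap \Delta$. For each $a_i \in A_G$ and each critical point $b_{ij} \in B_G \cap F^{-1}(a_i)$, the local Milnor fibre of the ICIS $F$ at $b_{ij}$ is a bouquet of $\mu_{ij}$ spheres of dimension $n$, by the L\^e--Hamm theorem for ICIS. Combining this with the stratwise decomposition \eqref{eq:pieces}, the portion of $\Fib(H)$ lying over a sufficiently small closed disc $D_i \subset \Fib(G)$ centred at $a_i$ is homotopy equivalent to a generic-fibre copy $\cF_i \simeq \cF$ with $\mu_{ij}$ $n$-cells attached for each $j$, along the classical vanishing $(n-1)$-cycles inside $\cF_i$.

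Finally, I would glue the pieces together. Covering $\Fib(G)$ by the closed discs $D_i$ and by $\Fib(G) \setminus \{a_i\}_i$, the $\cF$-bundle $E$ extends over each contractible disc $D_i$ to a genuine $\cF$-bundle over the full base $\Fib(G)$; on the annular overlaps this extension agrees with the local product model provided by the ICIS fibration of $F$. The full fibre $\Fib(H)$ is therefore the total space of this $\cF$-bundle over $\Fib(G)$ with a total of $N = \sum_{b_{ij}\in B_G} \mu_{ij}$ cells of dimension $n$ attached.

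The main obstacle I anticipate is this last gluing step: one must verify that the stratified local trivialisations produced by Theorem \ref{t:composed} near each $b_{ij}$ match the classical ICIS Milnor fibration of $F$, so that the attaching maps of the $n$-cells are precisely the vanishing cycles at $b_{ij}$. This reduces to a subanalytic excision / Mayer--Vietoris argument, legitimised by the conical structure of subanalytic sets and by the transversality of $\Fib(G)$ to the one-dimensional strata of $\cQ$ at each point $a_i$.
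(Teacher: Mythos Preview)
Your proposal is correct and follows essentially the same route as the paper: the discussion in \S\ref{ss:nemethi-case} preceding the corollary \emph{is} the paper's proof, and it proceeds exactly as you do --- apply Theorem~\ref{t:Fib} to the open stratum $V=\bC^{n+1}\setminus F^{-1}(\Delta)$ to get the $\cF$-bundle over $\Fib(G)\setminus\Delta$, then invoke the classical ICIS picture at each $b_{ij}$ to see that passing from the generic fibre to the singular one amounts to attaching $\mu_{ij}$ cells of dimension~$n$. Your explicit mention of the L\^e--Hamm bouquet and of the Mayer--Vietoris/excision justification for the gluing is more detailed than what the paper writes down, but the underlying argument is the same.
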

Let us point out that in practice one needs more data in order to find the homotopy type of the fibre of $H$. We refer to  \cite{Ne} for several examples of (classes of)  hypersurface singularities treated there as composed singularities,  and which had occurred before in the literature.  In particular, Nemethi finds the precise homotopy type of the fibre of $H$ in the case $\Delta \cap G^{-1}(0) = \{0\}$, cf  \cite[Theorem A(b)]{Ne}.

%%%%%%%%%%%%%%%%%%%%%%%%%%%
\bigskip

\vspace{\fill}

%%%%%%%%%%%%%%%%%%%%%
\end{document}